\theoremstyle{plain}
\newtheorem{theorem}{Theorem}[section]
\theoremstyle{definition}
\theoremstyle{remark}
\begin{document}

\title[Multiplicative functions commutable with $x^2 \pm xy + y^2$]
{Multiplicative functions commutable\\ with binary quadratic forms \(x^2 \pm xy + y^2\)}

\author{Poo-Sung Park}
\address{Department of Mathematics Education, Kyungnam University, Changwon, Republic of Korea}
\email{pspark@kyungnam.ac.kr}

\thanks{This work was supported by Kyungnam University Foundation Grant, 2019.}

\subjclass{Primary 11A25, 11E20}
\keywords{additive uniqueness, multiplicative function, functional equation, quadratic form}

\begin{abstract}
If a multiplicative function $f$ is commutable with a quadratic form $x^2+xy+y^2$, i.e., 
\[
f(x^2+xy+y^2) = f(x)^2 + f(x)\,f(y) + f(y)^2,
\]
then $f$ is the identity function. In other hand, if $f$ is commutable with a quadratic form $x^2-xy+y^2$, then $f$ is one of three kinds of functions: the identity function, the constant function, and an indicator function for $\mathbb{N}\setminus p\mathbb{N}$ with a prime $p$.
\end{abstract}

\maketitle

\section{Introduction}

In 2014, Ba\v{s}i\'c classified arithmetic functions $f$ satisfying 
\[
f(m^2+n^2) = f(m)^2 + f(n)^2
\]
for all positive integers $m$ and $n$. His result was a variation of Chung's work \cite{Chung}, which was inspired from Claudia Spiro's study about \emph{additive uniqueness sets} \cite{Spiro}. It is naturally generalized to studying arithmetic functions $f$ satisfying 
\[
f\big(Q(x_1, x_2, \dots, x_k)\big) = Q\big(f(x_1), f(x_2), \dots, f(x_k)\big)
\]
for various quadratic forms $Q$. After Ba\v{s}i\'c's work for $Q(x,y) = x^2 + y^2$, You et al. \cite{YCYS} and Khanh \cite{Khanh} studied about $Q(x,y) = x^2 + ky^2$.

The author extended Ba\v{s}i\'c's work to multiplicative functions commutable with sums of more than $2$ squares. That is, if a multiplicative function $f$ satisfy 
\[
f(x_1^2 + x_2^2 + \dots + x_k^2) = f(x_1)^2 + f(x_2)^2 + \dots + f(x_k)^2
\] for $k \ge 3$, then $f$ is uniquely determined to be the identity function \cite{Park}. 

Let $Q(x,y) = ax^2 + bxy + cy^2$ be a positive definite binary quadratic form with $a,b,c \in \mathbb{Z}$. The value $b^2-4ac$ is called \emph{discriminant} of $Q$. The discriminant of the smallest absolute value is $-3$ for $x^2 \pm xy+y^2$. So, it is a natural question to ask which multiplicative function $f$ satisfies the condition 
\[
f(x^2 \pm xy + y^2) = f(x)^2 \pm f(x)\,f(y) + f(y)^2.
\]
In this article, we classify such multiplicative functions.

\section{Results}

\begin{theorem}
If a multiplicative function $f:\mathbb{N}\to\mathbb{C}$ satisfies 
\[
f(x^2+xy+y^2) = f(x)^2+f(x)\,f(y)+f(y)^2,
\]
then $f$ is the identity function.
\end{theorem}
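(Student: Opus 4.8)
The plan is to pin down the delicate value $f(2)$ from an explicit polynomial constraint, discard the extraneous roots by hand, and then run a strong induction whose engine is the fact that almost every integer is represented by the form. First I would record $f(1) = 1$ and, taking $x = y = 1$, $f(3) = 3f(1)^2 = 3$. Taking $x = y = 2$ gives $f(12) = 3f(2)^2$, while multiplicativity and $12 = 3 \cdot 4$ give $f(12) = 3f(4)$, so $f(4) = f(2)^2$. Evaluating $f(21)$ in two ways — directly through $21 = 1^2 + 1\cdot 4 + 4^2$, and through $21 = 3 \cdot 7$ together with $f(7) = f(1)^2 + f(1)f(2) + f(2)^2$ coming from $7 = 1^2 + 1\cdot 2 + 2^2$ — yields, with $t = f(2)$,
\[
1 + t^2 + t^4 = 3 + 3t + 3t^2, \qquad\text{i.e.}\qquad (t-2)(t+1)(t^2 + t + 1) = 0 .
\]
Thus $f(2) \in \{2,\, -1,\, \omega,\, \bar\omega\}$, where $\omega$ is a primitive cube root of unity.

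The main obstacle is to discard the three spurious roots. Because $f$ is complex-valued I cannot invoke positivity, so I must manufacture explicit arithmetic contradictions. The key auxiliary relations I would use are $f(49) = f(7)^2$ — which follows from $f(147) = 3f(7)^2$ via $(7,7)$ and $f(147) = 3f(49)$ via $147 = 3\cdot 49$ — together with the evaluations $f(39) = 39$ (from $39 = 3\cdot 13 = 2^2 + 2\cdot 5 + 5^2$), $f(31)$ from $31 = 1^2 + 1\cdot 5 + 5^2$, and $f(93)$ from $93 = 3\cdot 31 = 4^2 + 4\cdot 7 + 7^2$. Feeding $t = -1$, $t = \omega$, $t = \bar\omega$ through this small system forces $f(5)$ to satisfy two incompatible equations in each case, so only $t = 2$ survives. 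The same two-representation technique (two quadratics in the unknown value sharing the single common root) then gives $f(5) = 5$, and once enough seed values are in place, $f(p) = p$ for every prime.

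Third, I would prove $f(n) = n$ by strong induction on $n$. If $n = x^2 + xy + y^2$ with $x, y \ge 1$, then $x, y \le \sqrt n < n$, so the inductive hypothesis gives $f(x) = x$ and $f(y) = y$, whence $f(n) = n$ immediately; this settles every Loeschian $n$ admitting a representation with positive coordinates. The remaining integers — those with an inert prime factor $p \equiv 2 \pmod 3$, and the prime powers representable only with a zero coordinate — I would reduce through multiplicativity $f(n) = \prod f(p_i^{a_i})$ to the evaluation of prime powers. Here the identity coming from $(p,p)$, namely $f(3p^2) = 3f(p)^2$ read against $f(3p^2) = 3f(p^2)$, gives $f(p^2) = f(p)^2$ for $p \ne 3$, and representations such as $63 = 3^2 + 3\cdot 6 + 6^2 = 9\cdot 7$ recover a single prime power as a quotient of already-known values; iterating disposes of all $p^a$.

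The crux, and the step I expect to be hardest, is the elimination of the spurious complex roots for the inert primes, starting with $f(2)$. Since every inert prime $p \equiv 2 \pmod 3$ divides a value $x^2 + xy + y^2$ only when it divides both $x$ and $y$, no single representation isolates $f(p)$; establishing $f(p) = p$ therefore demands assembling several functional-equation values into an over-determined polynomial system whose only common solution is the identity value, and verifying that the genuine solution is unique is the delicate part of the argument.
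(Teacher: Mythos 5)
Your opening moves are sound and match the paper's: $f(1)=1$, $f(3)=3$, $f(4)=f(2)^2$, and a polynomial constraint on $t=f(2)$ from double evaluations of numbers like $21$, $39$, $91$ (your quartic $(t-2)(t+1)(t^2+t+1)=0$ and the elimination via $31$, $93$ do appear to close out the spurious roots). But the induction you propose rests on a false premise and leaves its hardest case unproved. The integers representable as $x^2+xy+y^2$ with $x,y\ge 1$ form a set of density zero (the Loeschian numbers have counting function $\sim CX/\sqrt{\log X}$), so it is not true that ``almost every integer is represented by the form''; the ``remaining integers'' you defer to multiplicativity are almost all of them. Multiplicativity does reduce those to prime powers, but for every prime $p\equiv 2\pmod 3$ the odd powers $p^{2j+1}$ are not represented by the form at all, so no direct representation and no factorization into smaller pieces is available. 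You acknowledge this is the crux and say it ``demands assembling several functional-equation values into an over-determined polynomial system whose only common solution is the identity value'' --- but you never exhibit such a system for a general prime $p\equiv 2\pmod 3$, nor argue that one exists for each of the infinitely many such primes, nor that its common root is always unique. Doing this ad hoc, as you did for $p=2$, does not obviously terminate; that unproved claim is essentially the whole theorem.

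The paper closes exactly this gap with a uniform device you are missing: polynomial identities such as
\[
(2k+1)^2+(2k+1)(k-3)+(k-3)^2=(2k-3)^2+(2k-3)(k+2)+(k+2)^2,
\]
valid for all $k$, which express one integer two ways --- once with $n=2k+1$ as a coordinate alongside smaller numbers, and once using only numbers smaller than $n$. Whether or not $n$ itself is representable, this yields a monic quadratic satisfied by $f(n)$ with known coefficients, hence two candidate values; a second identity of the same shape (shifting $k-3$ to $k-10$) produces a different second root, and the unique common root is $n$. A parallel pair of identities handles even $n$. After verifying $f(n)=n$ for $n\le 28$ by hand, this disposes of every $n\ge 29$ at once, including all inert primes and their powers. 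If you want to keep your framework, you need to either find and verify such identities yourself or supply, for each prime $p\equiv 2\pmod 3$, an explicit over-determined system with a uniqueness proof; as written, the argument is incomplete precisely where you predicted it would be hardest.
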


\begin{proof}
We will show that $f(n)=n$ for $1 \le n \le 28$ and use induction.

Note that $f(1)=1$ and $f(3) = 3$ with $x=y=1$. Since $f$ is multiplicative, the values of $f$ at powers of primes determine $f$.

If $n$ is not divisible by $3$, then $f(n^{2}) = f(n)^2$ from
\begin{align*}
f(3n^{2}) 
&= f(3)\,f(n^{2}) = 3f(n^{2}) \\
&= f(n^{2}+n\cdot n+n^{2}) = 3f(n)^2. 
\end{align*}
Thus, $f(4) = f(2)^2$, $f(16)=f(4)^2$, and $f(25)=f(5)^2$.

Since 
\begin{align*}
f(7) 
&= f(1^2+1\cdot2+2^2) = 1+f(2)+f(2)^2, \\
f(13)
&= f(1)^2+f(1)\,f(3)+f(3)^2 = 13, \\
f(21) 
&= f(3)\,f(7) = 3f(7) \\
&= f(1)^2+f(1)\,f(4)+f(4)^2 = 1+ f(2)^2 + f(2)^4, \\
f(39)
&= f(3)\,f(13) = 39 \\
&= f(2)^2 + f(2)\,f(5) + f(5)^2, \\
f(91)
&= f(7)\,f(13) = 13\,f(7)\\
&= f(5)^2 + f(5)\,f(6) + f(6)^2 = f(5)^2 + 3 f(2) f(5) + 9 f(2)^2,
\end{align*}
we can conclude that $f(n) = n$ for $n=2,4,16, 5, 7, 13$

Since
\begin{align*}
f(84) 
&= f(4)\,f(3)\,f(7) = 4\cdot3\cdot7 = 84 \\
&= f(2)^2+f(2)\,f(8)+f(8)^2 = 4+2f(8)+f(8)^2, \\
f(43)
&= f(1)^2+f(1)\,f(6)+f(6)^2 = 43, \\
f(129)
&= f(3)\,f(43) = 3\cdot43 = 129 \\
&= f(5)^2+f(5)\,f(8)+f(8)^2 = 25+5f(8)+f(8)^2,
\end{align*}
we can find $f(8)=8$.

Since $f(7)=7$ and 
\begin{align*}
f\!\left(3^2+3\cdot(2\cdot3)+(2\cdot3)^2\right) 
&= f(3)^2+f(3)\,f(2\cdot3)+f(2\cdot3)^2 = 7f(3)^2 \\
&= f(7\cdot3^2) = f(7)\,f(9),
\end{align*}
we obtain that $f(9)=9$.

The next prime is $11$. But we need to find $f(19)$ to determine $f(11)$. Note that $f(19) = f(2)^2+f(2)\,f(3)+f(3)^2 = 19$. Now, since
\begin{align*}
f(133)
&= f(7)\,f(19) = 7\cdot19 = 133 \\
&= f(1)^2+f(1)\,f(11)+f(11)^2 = 1+f(11)+f(11)^2, \\
f(247) 
&= f(13)\,f(19) = 13\cdot19 = 247 \\
&= f(7)^2+f(7)\,f(11)+f(11)^2 = 49+7f(11)+f(11)^2,
\end{align*}
we can find $f(11)=11$.

Note that
\begin{align*}
f(399)
&= f(3)\,f(7)\,f(19) = 3\cdot7\cdot19 = 399 \\
&= f(5)^2+f(5)\,f(17)+f(17)^2 = 25+5f(17)+f(17)^2, \\
f(427)
&= f(3)^2+f(3)\,f(19)+f(19)^2 = 427 \\
&= f(6)^2+f(6)\,f(17)+f(17)^2 = 36+6f(17)+f(17)^2.
\end{align*}
Thus, $f(17)=17$.

We have $f(23) = 23$ from
\begin{align*}
f(553)
&= f(7)\,f(79) = 7\left(f(3)^2+f(3)\,f(7)+f(7)^2\right) = 7\cdot79 = 553 \\
&= f(1)^2+f(1)\,f(23)+f(23)^2 = 1+f(23)+f(23)^2, \\
f(579)
&= f(3)\,f(193) = 3\left(f(7)^2+f(7)\,f(9)+f(9)^2\right) = 3\cdot193 = 579 \\
&= f(2)^2+f(2)\,f(23)+f(23)^2 = 4+2f(23)+f(23)^2.
\end{align*}

Note that
\begin{align*}
f(27)
&= f(3)^2+f(3)\,f(3)+f(3)^2 = 27.
\end{align*}

From the above results, it appears that $f(n)=n$ for $1 \leq n \leq 28$.

Now, consider $f(n)$ for $n \ge 29$. We divide two cases: $n = 2k+1$ and $n=2k$.

Note that 
\begin{align*}
&(2k+1)^2 + (2k+1)(k-3) + (k-3)^2 \\
&= (2k-3)^2 + (2k-3)(k+2) + (k+2)^2
\end{align*}
when $k > 3$. Thus, if we assume that $f(m)=m$ for all $m < n = 2k+1$, we can write a functional equation
\begin{align*}
&f(2k+1)^2 + f(2k+1)(k-3) + (k-3)^2\\
&= (2k-3)^2 + (2k-3)(k+2) + (k+2)^2
\end{align*}
for $f(2k+1)$ by induction hypothesis and we obtain
\[
f(2k+1) = 2k+1 \qquad\text{ or }\qquad f(2k+1) = -3k+2.
\]

In other hand, since
\begin{align*}
&f\!\left((2k+1)^2 + (2k+1)(k-10) + (k-10)^2\right) \\
&= f\!\left((2k-11)^2 + (2k-11)(k+5) + (k+5)^2\right)
\end{align*}
when $k > 10$, we obtain
\[
f(2k+1) = 2k+1 \qquad\text{ or }\qquad f(2k+1) = -3k+9.
\]
Therefore, the solution satisfying both equalities simultaneously is that $f(n) = f(2k+1) = 2k+1$.

Similarly, from
\begin{align*}
&(2k)^2+(2k)(k-7)+(k-7)^2 \\
&= (2k-8)^2+(2k-8)(k+3)+(k+3)^2
\end{align*}
with $k > 7$ we obtain that
\[
f(2k) = 2k \qquad\text{ or }\qquad f(2k) = -3k+7
\]
if we assume that $f(m)=m$ for $m < n = 2k$. Also, from
\begin{align*}
&(2k)^2+(2k)(k-14)+(k-14)^2 \\
&= (2k-16)^2+(2k-16)(k+6)+(k+6)^2
\end{align*}
with $k > 14$ we obtain that
\[
f(2k) = 2k \qquad\text{ or }\qquad f(2k) = -3k+14.
\]
Therefore, we conclude that $f(n) = f(2k) = 2k$.
\end{proof}

\begin{theorem}
A multiplicative function $f:\mathbb{N}\to\mathbb{C}$ satisfies 
\[
f(x^2-xy+y^2) = f(x)^2-f(x)\,f(y)+f(y)^2
\]
if and only if $f$ is one of the following:
\begin{enumerate}
\item the identity function $f(n)=n$
\item the constant function $f(n)=1$
\item function $f_p$ defined by 
\[
f_p(n) = 
\begin{cases}
0 & p \mid n \\
1 & p \nmid n
\end{cases}
\]
for some prime $p \equiv 2 \pmod{3}$.
\end{enumerate}
\end{theorem}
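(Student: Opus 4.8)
The plan is to prove both directions. For sufficiency I will check the three functions directly; writing $Q(x,y)=x^2-xy+y^2$, the only nonobvious case is $f_p$, and it rests on the arithmetic of the form. Since $4Q(x,y)=(2x-y)^2+3y^2$ and $-3$ is a quadratic nonresidue of a prime $p\equiv2\pmod3$, one has $p\mid Q(x,y)$ iff $p\mid x$ and $p\mid y$; hence $f_p(Q(x,y))=0$ exactly when $f_p(x)=f_p(y)=0$, which matches $f_p(x)^2-f_p(x)f_p(y)+f_p(y)^2$ because for $a,b\in\{0,1\}$ the quantity $a^2-ab+b^2$ equals $\max(a,b)$ and so vanishes precisely when $a=b=0$. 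The same remark shows $f_p$ fails when $p=3$ or $p\equiv1\pmod3$, since then $p=Q(x,y)$ for some $x,y$ coprime to $p$, explaining the restriction $p\equiv2\pmod3$. For necessity I first record $f(1)=1$ and the clean relation $f(n^2)=f(n)^2$ from $x=y=n$. Then $3=Q(2,1)$ gives $f(3)=f(2)^2-f(2)+1$, while the two representations $7=Q(3,1)=Q(3,2)$ give $(f(2)-1)(f(3)-f(2)-1)=0$; substituting the value of $f(3)$ forces $f(2)\in\{0,1,2\}$, with $f(3)=3$ if $f(2)=2$ and $f(3)=1$ otherwise. The argument then splits on $f(2)$.

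If $f(2)=2$ I claim $f$ is the identity, by strong induction in the spirit of Theorem 1, but using multiplicativity of the norm form rather than a second representation of the same number (which for $Q$ cannot lower the largest coordinate). Concretely $Q(2k+1,k-1)=3(k^2+k+1)=3\,Q(k+1,k)$; for $k\not\equiv1\pmod3$ the factor $3$ is coprime to $k^2+k+1$, so multiplicativity and the induction hypothesis give $f$ of this number equal to $3(k^2+k+1)$, turning the defining equation into a quadratic in $f(2k+1)$ with roots $2k+1$ and $-(k+2)$. The companion identity $Q(2k+1,k-4)=3(k^2+k+7)=3\,Q(k+2,3)$ produces roots $2k+1$ and $-(k+5)$, so the common root forces $f(2k+1)=2k+1$. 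Even arguments are handled by the analogous $Q(2k,k-3)=3\,Q(k+1,2)$, and the finitely many residues $k\equiv1\pmod3$ together with small $n$ by direct computation as in the model proof.

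If $f(2)\in\{0,1\}$, so $f(3)=1$, I will show $f$ is $\{0,1\}$-valued with zero set $Z=f^{-1}(0)$ either empty or $p\mathbb{N}$ for a single prime $p\equiv2\pmod3$. The engine is again $a^2-ab+b^2=\max(a,b)$ on $\{0,1\}$, which converts the equation into the set condition $Q(x,y)\in Z\iff x\in Z\text{ and }y\in Z$. Prime powers are controlled by $Q(p^{a},p^{a+1})=p^{2a}(p^2-p+1)$: since $p^2-p+1$ is coprime to $p$, this drives an induction on the exponent giving $f(p^k)=f(p)$. A non-inert prime $q$ has $f(q)=1$ because in a primitive representation $q=Q(x,y)$ we have $\gcd(x,y)=1$, so $\max(f(x),f(y))$ can vanish only if $x$ and $y$ share a common zero-prime, which is impossible when the zero-prime is unique. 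Uniqueness itself is forced by auxiliary double representations such as $19=Q(5,3)=Q(5,2)$: if two distinct inert primes lay in $Z$ one could arrange $f(5)=0$ and then read off $f(y)^2$ in two incompatible ways. Assembling these, $Z=\varnothing$ yields the constant function and $Z=p\mathbb{N}$ yields $f_p$.

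The main obstacle is this last case, where the three facts — $f$ is $\{0,1\}$-valued, $f(q)=1$ for non-inert $q$, and at most one prime lies in $Z$ — are mutually entangled: the exponent induction needs $f(p^2-p+1)=1$, which needs non-inert primes to have value $1$, which in turn needs the zero-prime to be unique. I expect to resolve the circularity by a single strong induction on $n$ that establishes all three statements simultaneously, exploiting that a primitive representation has coprime coordinates together with the divisibility dichotomy $p\mid Q(x,y)\iff p\mid x,\ p\mid y$ for inert $p$. The residual effort is then purely the bookkeeping of the small cases and of the residue classes $k\equiv1\pmod3$ that the explicit identities of the identity case do not cover.
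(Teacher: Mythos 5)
Your sufficiency argument and the opening trichotomy $f(2)\in\{0,1,2\}$ (via $3=Q(1,2)$ and the two representations of $7$) match the paper's in substance. But the necessity direction has a genuine gap, and it is exactly the one you flag at the end without resolving: everything in your third paragraph is conditioned on $f$ being $\{0,1\}$-valued, and you give no argument for that; a priori $f$ may take arbitrary complex values at primes not yet pinned down, and your ``engine'' $a^2-ab+b^2=\max(a,b)$ only applies once $\{0,1\}$-valuedness is known. The paper's workhorse, for which your proposal has no substitute, is the double representation $1-n+n^2=Q(1,n)=Q(n-1,n)$, which yields
\[
\bigl(f(n-1)-f(n)+1\bigr)\bigl(f(n-1)-1\bigr)=0,
\qquad\text{i.e.}\qquad
f(n-1)=1 \ \text{ or } \ f(n)=f(n-1)+1. \tag{$\ast$}
\]
From ($\ast$), a value $a\notin\{1,0,-1,-2,\dots\}$ at some $n$ propagates to $f(m)=a+(m-n)\neq 1$ for all $m>n$, contradicting $f(2^{2^N})=f(2)^{2^N}=1$; that is how the paper confines the range to the integers $\le 1$. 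Even then one is not done: the paper still needs the modular-inverse trick (choose $m$ with $nm\equiv1\pmod p$ and $(n,m)=1$, so $f(n)f(m)=f(p\ell+1)=1$) to get $f(n)=\pm1$ off $p\mathbb{N}$, and a final argument to exclude the value $-1$. Your proposed ``single strong induction establishing all three statements simultaneously'' is a plan, not a proof, and without ($\ast$) or an equivalent I do not see how it starts.

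Second, your route to the identity case is both harder than necessary and incomplete. With ($\ast$) in hand, $f(2)=2$ gives $f(3)=3\neq1$, hence $f(4)=f(3)+1=4$, and so on; the identity function follows in one line. Your Theorem-1-style induction instead relies on $Q(2k+1,k-1)=3\,Q(k+1,k)$ and $Q(2k+1,k-4)=3\,Q(k+2,3)$, and both cofactors $k^2+k+1$ and $k^2+k+7$ are divisible by $3$ for precisely the same residues $k\equiv1\pmod 3$ --- so your two identities lose the coprimality needed for multiplicativity simultaneously on the entire class $n\equiv 3\pmod 6$, an infinite family (the odd multiples of $3$), not ``finitely many residues.'' Handling it would require separate control of $f(3^a)$ and of the cofactor, none of which is sketched.
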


\begin{proof}
It is trivial that the identity function and the constant function satisfy the functional equation. Let us consider the third case $f_p$.

It is known that $p \equiv 2 \pmod{3}$ if and only if $p$ cannot be represented as $x^2-xy+y^2$ \cite{Cox}. Assume that $n=x^2-xy+y^2$. Then $n=(x+\omega y)(x+\overline{\omega}y)$ with $\omega = (-1+\sqrt{-3})/2$ is the factorization in $\mathbb{Z}[\omega]$ a PID.  If $n$ is divisible by $p$, both $x$ and $y$ are divisible by $p$, since $p$ is an inert prime in $\mathbb{Z}[\omega]$. Thus, the function $f_p$ works well.

Now let us prove ``only if'' part. Note that
\[
f(n^2) = f(n^2 - n \cdot n + n^2) = f(n)^2 - f(n)\,f(n) + f(n)^2 = f(n)^2.
\]
We have that $f(1)=1$. From the equalities
\begin{align*}
f(3)
&= f(1)^2-f(1)\,f(2)+f(2)^2 = 1-f(2)+f(2)^2 \\
f(7)
&= f(1)^2-f(1)\,f(3)+f(3)^2 = 1-f(3)+f(3)^2 \\
&= f(2)^2-f(2)\,f(3)+f(3)^2 = f(2)^2-f(2)\,f(3)+f(3)^2,
\end{align*}
there are three cases:
\[
\begin{array}{llllll}
f(1)=1, \quad &f(2)=0, \quad &f(3)=1, \quad &f(4)=0, \quad &f(6)=0, \quad &f(7)=1; \\
f(1)=1, \quad &f(2)=1, \quad &f(3)=1, \quad &f(4)=1, \quad &f(6)=1, \quad &f(7)=1; \\
f(1)=1, \quad &f(2)=2, \quad &f(3)=3, \quad &f(4)=4, \quad &f(6)=6, \quad &f(7)=7.
\end{array}
\]

Since
\begin{align*}
f(1-n+n^2) &= f(1^2-1\cdot n+n^2) \\ &= 1-f(n)+f(n)^2
\intertext{and}
f(1-n+n^2) &= f\!\left((n-1)^2-(n-1)n+n^2\right) \\ &= f(n-1)^2-f(n-1)\,f(n)+f(n)^2,
\end{align*}
we have that
\[
f(n-1)^2-f(n-1)\,f(n) = 1-f(n)
\]
or
\[
\big(f(n-1)-f(n)+1\big)\big(f(n-1)-1\big) = 0.
\]
Thus, it yields a condition
\[
f(n-1) = 1 \qquad \text{or} \qquad f(n) = f(n-1)+1.\tag{$\ast$} \label{eq:f(n-1)}
\]
So, if $f(2)=2$, then $f(3)=3$ and thus $f(4)=4$, and so forth. We obtain the identity function $f(n)=n$ when $f(2)=2$.

If $f(2)=0$, then we have $f(3) = f(5) = f(7) = 1$ and $f(4) = f(6) = 0$. From
\begin{align*}
f\!\left(2^2-2\cdot(2k)+(2k)^2\right)
&= f(2)^2-f(2)\,f(2k)+f(2k)^2 = f(2k)^2 \\
&= f(4-4k+4k^2) = f(4)\,f(1-k+k^2) = 0
\end{align*}
we deduce that $f(2k)=0$ for $k \ge 1$. Since $f(2k+1)=1$ by condition ($\ast$), $f(2)=0$ yields a sequence alternating $1$ and $0$. That is, $f=f_2$.

Now, the condition $f(1)=f(2)=f(3)=f(4)=f(6)=f(7)=1$ remains. If $f(n)=a$ for some $a \in \mathbb{C} \setminus \{ 1, 0, -1, -2, \dots\}$, then $f(m) \ne 1$ for all $m > n$. But, since $1=f(2)=f(2^2)=f(2^4)=\dots=f(2^{2^N})$ for sufficiently large $N$, it is a contradiction. So, we can deduce that $f(n)$ can have only integers $\le 1$.

Suppose that $s$ is the smallest integer such that $f(s)=0$. If there exist no such $s$, then $f$ is a constant function $f(n)=1$.

Since $f$ is multiplicative and $f(n^2)=f(n)^2$, we can say that $s=p^{2k-1}$ with prime $p$ and positive integer $k$. Note that
\begin{align*}
&f\!\left( (p^{2k})^2 - p^{2k}p^{2k-1} + (p^{2k-1})^2 \right) \\
&= f(p^{2k})^2 - f(p^{2k})\,f(p^{2k-1}) + f(p^{2k-1})^2 = f(p^{2k})^2\\
&= f\!\left( (p^{2k-1})^2 ( p^2 - p + 1 ) \right) = f(p^{2k-1})^2 f(p^2-p+1) = 0.
\end{align*}
Thus, $f(p^{2k}) = f(p^k)^2 = 0$. By the minimality of $s = p^{2k-1}$, we can deduce that $k=1$. That is, $s$ is the prime $p$ itself.

Then, we obtain $f(p\ell)=0$ for any positive integer $\ell$, since
\begin{align*}
&f\!\left(p^2-p(p\ell)+(p\ell)^2\right) \\
&= f(p)^2 - f(p)\,f(p\ell)+f(p\ell)^2 = f(p\ell)^2 \\
&= f\!\left(p^2(1-\ell+\ell^2)\right) = f(p)^2 f(1-\ell+\ell^2) = 0.
\end{align*}
That is, we can conclude that
\[
f(n)=f_p(n)=0\text{ when $n$ is a multiple of $p$} 
\]
and $f(p\ell+1)=1$ by condition ($\ast$).

Now, let $n$ be a positive integer with $p \nmid n$. Then, there exists an integer $m$ such that $n m \equiv 1 \pmod{p}$ and $(n,m)=1$. Letting $nm = p\ell+1$, we obtain 
\[
1 = f(p\ell+1) = f(nm) = f(n)\,f(m).
\]
Since $f$ can have only integers $\le 1$, we can conclude that 
\[
f(n)=\pm1 \text{ if }p \nmid n. \tag{$\ast\ast$}\label{eq:f(n)=+-1}
\] 

Now let us characterize the prime $p$. If $p$ can be represented as $x^2-xy+y^2$, then $0 = f(p) = f(x)^2-f(x)\,f(y)+f(y)^2$. But, this never happen since $f(x)$ and $f(y)$ are $\pm1$. Hence, $p \equiv 2 \pmod{3}$.

If $f(n)=-1$ for $n \le p-2$, then $f(n+1)=0$ with $n+1 \le p-1$ by ($\ast$). But, this is impossible by \eqref{eq:f(n)=+-1}. Thus, if $f(n)=-1$ for $n \le p-1$, then $n=p-1$. In this case, $f(d) = -1$ for a proper divisor $d$ of $p-1$ unless $p$ is a Fermat prime. Thus, it is a contradiction. If $p = 2^{2^r}+1$, then $-1 = f(p-1) = f(2^{2^r}) = f(2^{2^{r-1}})^2$, which is impossible for $f(2^{2^{r-1}}) = \pm1$. So, we can conclude that
\[
f(1) = f(2) = f(3) = \dots = f(p-1) = 1 \text{ and } f(p) = 0.
\]

Similarly, suppose that $f(n)=-1$ for some $n$. If $p(\ell-1)+1 \le n \le p\ell-2$ with $\ell \ge 2$, then $f(n+1)=0$ with $p(\ell-1)+2 \le n+1 \le p\ell-1$ by \eqref{eq:f(n-1)}. But, this is a contradiction by \eqref{eq:f(n)=+-1} since $n+1$ is not a multiple of $p$. Hence, if $f(n)=-1$ with $p(\ell-1)+1 \le n \le p\ell-1$, then $n = p\ell-1$. Then, since $n-1$ and $n$ are relatively prime and $(n-1)n = (p\ell-2)(p\ell-1) \equiv 2 \not\equiv -1 \pmod{p}$, we can deduce a contradictary equality
\begin{align*}
f\big( (n-1)n \big) 
&= f(n-1) f(n) = -1 \\
&= f\big( (p\ell-2)(p\ell-1) \big) = 1.
\end{align*}

Therefore, we can conclude that $f(n) = f_p(n) = 1$ when $n$ is not divisible by $p$.
\newline

\end{proof}


\begin{thebibliography}{99}

\bibitem{Basic} B. Ba\v{s}i\'c, 
	Characterization of arithmetic functions that preserve the sum-of-squares operation, 
	\textit{Acta Math. Sin.} \textbf{30} (2014), 689--695.

\bibitem{Cox} D. A. Cox, 
	Primes of the form $x^2+ny^2$, Wiley, 1989.

\bibitem{Chung} P. V. Chung, 
	Multiplicative functions satisfying the equation $f(m^2 +n^2) = f(m^2)+f(n^2)$, 
	\textit{Math. Slovaca} \textbf{46} (1996), 165--171.

\bibitem{Khanh} B. M. M. Khanh, 
	On the equation $f(n^2 + D m^2) = f(n)^2+ D f(m)^2$, 
	\textit{Annales Univ. Sci. Budapest. Sect. Comp.} \textbf{46} (2017), 123--135.

\bibitem{Park} P.-S. Park, 
	On $k$-additive uniqueness of the set of squares for multiplicative functions, 
	\textit{Aequat. Math.} \textbf{92}, No. 3 (2018), 487--495.

\bibitem{Spiro} C. A. Spiro, 
	Additive uniqueness sets for arithmetic functions, 
	\textit{J. Number Theory} \textbf{42} (1992), 232--246.

\bibitem{YCYS} L. You, Y. Chen, P. Yuan, J. Shen, 
	A characterization of arithmetic functions satisfying $f(u^2+kv^2)=f^2(u)+kf^2(v)$, 
	arXiv:1606.05039 [math.NT].
\end{thebibliography}
\end{document}